\documentclass[11pt]{amsart}
\marginparwidth  0pt  \marginparsep 0pt \oddsidemargin  -0.1in
\evensidemargin  0pt \topmargin  -.3in \textwidth  6.5in
\textheight 9in
\usepackage{amsmath,amsthm,amsfonts,amssymb,latexsym,epsfig}

\newtheorem{theorem}{Theorem}[section]

\newtheorem{lemma}{Lemma}[section]

\numberwithin{equation}{section}

\theoremstyle{definition}

\theoremstyle{remark}

\begin{document}
\title{A Note On Mixed Mean Inequalities}
\author{Peng Gao}
\address{Department of Computer and Mathematical Sciences,
University of Toronto at Scarborough, 1265 Military Trail, Toronto
Ontario, Canada M1C 1A4} \email{penggao@utsc.utoronto.ca}
\date{September 17, 2007}
\subjclass[2000]{Primary 26D15} \keywords{Mixed-mean inequality, symmetric means}

%%-------------------------------------------------------------------
\begin{abstract}We give a simpler proof of a result of Holland concerning a mixed arithmetic-geometric mean inequality. We also prove a result of mixed mean inequality involving the symmetric means.
\end{abstract}

\maketitle
%%-----------------------------------------------------------------------
\section{Introduction}
\label{sec1}
%%------------------------------------------------------------------------
   Let $M_{n,r}({\bf x})$ be the generalized weighted power means:
   $M_{n,r}({\bf q}, {\bf x})=(\sum_{i=1}^{n}q_ix_i^r)^{\frac {1}{r}}$,
   where ${\bf q}=(q_1,q_2,\cdots,
   q_n)$, ${\bf x}=(x_1,x_2,\cdots,
   x_n)$, $q_i >0, 1 \leq i \leq n$ with $\sum_{i=1}^nq_i=1$. Here
   $M_{n,0}({\bf q}, {\bf x})$ denotes the limit of $M_{n,r}({\bf q}, {\bf x})$ as
   $r\rightarrow 0^{+}$. Unless specified, we always assume $x_i>0, 1 \leq i \leq n$.
    When there is no risk of confusion,
    we shall write $M_{n,r}$ for $M_{n,r}({\bf q}, {\bf x})$ and
    we also define $A_n=M_{n,1}, G_n=M_{n,0}, H_n=M_{n,-1}$.

   The celebrated
   Hardy's inequality (\cite[Theorem 326]{HLP}) asserts that for 
   $p>1, a_n \geq 0$,
\begin{equation*}
%%\label{eq:1} 
\sum^{\infty}_{n=1}\Big(\frac {\sum^n_{k=1}a_k}{n} \Big )^p \leq
\Big(\frac {p}{p-1} \Big )^p \sum^\infty_{n=1}a^p_n.
\end{equation*}

    Among the many different proofs of Hardy's inequality as well as its generalizations and extensions in the literature, one novel approach is via the mixed mean inequalities (see, for example, \cite[Theorem 7]{C&P}). By mixed mean inequalities, we shall mean the following inequalities: 
\begin{equation}
\label{8.19}
   \left ( \sum^m_{n=1}a_{m,n} \left ( \sum^m_{k=1}b_{n,k}x_k \right )^p \right
    )^{\frac 1{p}} \leq  \sum^m_{n=1}b_{m,n} \left ( \sum^m_{k=1}a_{n,k}x^p_k \right
    )^{\frac 1{p}},
\end{equation}
   where $(a_{i,j}), (b_{i,j})$ are two $m \times m$ matrices with non-negative
   entries and the above inequality are meant to hold for any vector ${\bf x} \in {\mathbb R}^{m}$ with non-negative entries. Here $p \geq 1$ and when $0< p \leq 1$ we
   want the inequality above to be reversed.

  The meaning of mixed mean becomes more clear when $(a_{i,j}), (b_{i,j})$ are weighted mean matrices. Here we say a matrix $A=(a_{n,k})$ is a weighted
    mean matrix if $a_{n,k}=0$ for $n<k$ and
\begin{equation}
\label{021}
    a_{n,k}=w_k/W_n,  ~~ 1 \leq k \leq
    n; \hspace{0.1in} W_n=\sum^n_{i=1}w_i, w_i \geq 0, w_1>0.
\end{equation}

   Now we focus our attention to the case of \eqref{8.19} for $(a_{i,j})=(b_{i,j})$ being weighted mean matrices given in \eqref{021}. In this case, for fixed ${\bf x}=(x_1,\cdots, x_n),{\bf w}=(w_1, \cdots, w_n)$, we define ${\bf x}_i=(x_1, \cdots, x_i), {\bf w}_i=(w_1,
\cdots, w_i), W_i=\sum^i_{j=1}w_j$, $M_{i,
   r}=M_{i,r}({\bf x}_i)=M_{i,r}({\bf w}_i/W_i, {\bf x}_i), {\bf M}_{i,r}=(M_{1,r}, \cdots,
M_{i,r})$. Then we have the following mixed mean inequalities of Nanjundiah \cite{N} (see also \cite{B}):
\begin{theorem}
\label{thm1.0}
  Let $r >s$ and $n \geq 2$. If for $
   2 \leq k \leq n-1$, $W_nw_k-W_kw_n>0$. Then
\begin{equation*}
   M_{n,s}({\bf M}_{n,r}) \geq M_{n,r}(
   {\bf M}_{n,s}),
\end{equation*}
   with equality holding if and only if $x_1 = \cdots =x_n$.
\end{theorem}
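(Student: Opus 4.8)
The plan is to reduce the two-parameter statement to a single-parameter ``arithmetic versus power mean'' comparison and then induct on $n$. First I would use the identity $M_{k,r}({\bf q},{\bf x})=M_{k,1}({\bf q},{\bf x}^r)^{1/r}$: setting $y_i=x_i^r$ and $t=s/r$, one checks that
\[ M_{n,s}({\bf M}_{n,r})=M_{n,t}({\bf A}_n)^{1/r}, \qquad M_{n,r}({\bf M}_{n,s})=A_n({\bf M}_{n,t})^{1/r}, \]
where $A_k=M_{k,1}({\bf y}_k)$, $M_{k,t}=M_{k,t}({\bf y}_k)$, and the bold symbols denote the corresponding vectors of prefix means of ${\bf y}=(y_1,\dots,y_n)$. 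Raising to the power $r$, the claim is therefore equivalent to $M_{n,t}({\bf A}_n)\ge A_n({\bf M}_{n,t})$ when $r>0$ (so $t<1$), and to the reverse inequality when $r<0$ (so $t>1$); the two ranges are handled by the same argument with the direction of the power-mean inequality flipped, so I focus on $r>0$, $t\le 1$.

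The reduced inequality I would prove by induction on $n$. For $n=2$ the hypothesis on the weights is vacuous and the statement is the one-variable inequality $\alpha y_1+\beta M_{2,t}\le(\alpha y_1^t+\beta A_2^t)^{1/t}$ with $\alpha=w_1/W_2$, $\beta=w_2/W_2$, which is checked directly. For the inductive step write $\alpha=W_{n-1}/W_n$, $\beta=w_n/W_n$, and split off the last index:
\[ A_n({\bf M}_{n,t})=\alpha\,A_{n-1}({\bf M}_{n-1,t})+\beta\,M_{n,t}. \]
Applying the induction hypothesis $A_{n-1}({\bf M}_{n-1,t})\le M_{n-1,t}({\bf A}_{n-1})$, writing $B=M_{n-1,t}({\bf A}_{n-1})$, and using $M_{n,t}({\bf A}_n)^t=\alpha B^t+\beta A_n^t$, it suffices to establish the single inequality
\[ \alpha\,B+\beta\,M_{n,t}\le\big(\alpha\,B^t+\beta\,A_n^t\big)^{1/t}. \]

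This last inequality is the heart of the matter, and proving it is the step I expect to be hardest. The crude bound $M_{n,t}\le A_n$ is not enough, since for $t\le 1$ the order-$t$ power mean on the right is dominated by, not dominating, the arithmetic combination; instead one must exploit the internal structure $M_{n,t}=(\alpha M_{n-1,t}^t+\beta y_n^t)^{1/t}$, $A_n=\alpha A_{n-1}+\beta y_n$, together with $M_{n-1,t}\le A_{n-1}\le B$, and it is precisely here that the hypothesis $W_nw_k-W_kw_n>0$ (equivalently $w_k/W_k>w_n/W_n$) must enter, through a convexity argument forcing the difference of the two sides to be nonnegative with equality only when all $y_i$ coincide. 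A second, more delicate point is the bookkeeping: the stated hypothesis only compares each $w_k/W_k$ with the top ratio $w_n/W_n$, so to run the induction cleanly I would either strengthen the inductive statement to the monotone condition $w_2/W_2\ge w_3/W_3\ge\cdots$ on all initial segments, or first argue that a general admissible weight configuration reduces to this monotone case; verifying that the equality characterization $x_1=\cdots=x_n$ survives both the reduction and the convexity step is the remaining technical work.
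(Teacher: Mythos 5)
Your reduction via $y_i=x_i^r$, $t=s/r$ is correct (for $r\neq 0$; when $r=0>s$ you must normalize by $s$ instead), and the splitting $A_n({\bf M}_{n,t})=\frac{W_{n-1}}{W_n}A_{n-1}({\bf M}_{n-1,t})+\frac{w_n}{W_n}M_{n,t}$ is the standard first move. But there is a genuine gap exactly where you say you expect difficulty, and it swallows the whole theorem: your ``single inequality'' $\alpha B+\beta M_{n,t}\le(\alpha B^t+\beta A_n^t)^{1/t}$, multiplied through by $W_n$, is precisely the Rado-type inequality $W_nM_{n,t}({\bf A}_n)\ge W_{n-1}M_{n-1,t}({\bf A}_{n-1})+w_nM_{n,t}$, which is Theorem \ref{thm1'} (with $s$ there equal to your $t$, in the variables $y_i$). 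In other words, your induction reduces Theorem \ref{thm1.0} to the known hard one-step statement without proving it --- this is in fact the very route the paper indicates (Theorem \ref{thm2} for $r=1$, $s=0$; \cite{T&T} in general), as the paper itself does not reprove Theorem \ref{thm1.0}. An appeal to ``a convexity argument forcing the difference to be nonnegative'' is not a proof, and the one concrete tool you name is false: the chain $M_{n-1,t}\le A_{n-1}\le B$ fails, since for $t\le 1$ one has $B=M_{n-1,t}({\bf A}_{n-1})\le\sum_{k=1}^{n-1}(w_k/W_{n-1})A_k$, which is strictly smaller than $A_{n-1}$ whenever the $y_i$ increase (e.g.\ $y=(1,2)$ with equal weights gives $B\le 1.25<1.5=A_2$). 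Note also that no argument treating $B$ as a free parameter can succeed: for $t<1$, $(\alpha B^t+\beta A_n^t)^{1/t}\sim\alpha^{1/t}B<\alpha B$ as $B\to\infty$, so the inequality is false for large $B$ and the specific size of $B=M_{n-1,t}({\bf A}_{n-1})$ must be exploited.

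The missing step is exactly what the paper's own toolkit supplies, via the two-move pattern used in the proof of Theorem \ref{thm8.2}: first apply Lemma \ref{lem8.2} to $y_1,\dots,y_n$ with $f(x)=x^t$ (concave for $0<t<1$, convex for $t>1$ or $t<0$, with the direction of the conclusion adjusted accordingly; this is the \emph{only} place the hypothesis $W_nw_k-W_kw_n>0$ enters), obtaining $W_{n-1}M_{n-1,t}({\bf A}_{n-1})=M_{n-1,t}(W_{n-1}{\bf A}_{n-1})\le M_{n,t}(W_n{\bf A}_n-w_n{\bf y}_n)$; then apply the Minkowski superadditivity $M_{n,t}({\bf u}+{\bf v})\ge M_{n,t}({\bf u})+M_{n,t}({\bf v})$ for $t<1$ with ${\bf u}=W_n{\bf A}_n-w_n{\bf y}_n$, ${\bf v}=w_n{\bf y}_n$ (the power-mean analogue of Theorem \ref{thm8}, just as Lemma \ref{lem8.3} plus Theorem \ref{thm8} drive the symmetric-mean case). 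Two further points you flag deserve confirmation: your bookkeeping worry is real, since the level-$n$ condition does not imply the lower-level ones (e.g.\ $w=(1,\,0.1,\,10,\,\epsilon)$ with $\epsilon$ small satisfies $W_4w_k-W_kw_4>0$ for $k=2,3$ but $W_3w_2-W_2w_3<0$), so the telescoping requires the hypothesis at every initial segment, which is how the sources state it; and your sketch never addresses the equality characterization $x_1=\cdots=x_n$, which must be tracked through both steps using the equality conditions of Lemma \ref{lem8.2} and of Minkowski's inequality.
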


  A very elegant proof of Theorem \ref{thm1.0} for the case $r=1, s=0$ is given by Kedlaya in \cite{Ked1}. In fact, the following Popoviciu-type inequalities were established in \cite{Ked1} (see also \cite[Theorem 9]{B}): 
\begin{theorem}
\label{thm2}
  Let $n \geq 2$. If for $2 \leq k \leq n-1$, $W_nw_k-W_kw_n>0$, then
\begin{equation*}
%%\label{1.2}
   W_{n-1}\Big(\ln M_{n-1,0}({\bf M}_{n-1,1})-\ln M_{n-1,1}({\bf M}_{n-1,0}) \Big) \leq W_n\Big (\ln M_{n,0}({\bf M}_{n,1})-\ln M_{n,1}({\bf M}_{n,0}) \Big)
\end{equation*}
   with equality holding if and only if $x_n=M_{n-1,0}=M_{n-1,1}({\bf M}_{n-1,0})$.
\end{theorem}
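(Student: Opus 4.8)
The plan is to prove the equivalent statement that the ``gap function''
\[
 f(m) = W_m\Big(\ln M_{m,0}({\bf M}_{m,1}) - \ln M_{m,1}({\bf M}_{m,0})\Big)
\]
is nondecreasing at $m=n$, i.e. $f(n)\ge f(n-1)$. First I would record the two closed forms $W_m\ln M_{m,0}({\bf M}_{m,1}) = \sum_{k=1}^m w_k \ln A_k$ and $M_{m,1}({\bf M}_{m,0}) = S_m/W_m$ with $S_m = \sum_{k=1}^m w_k G_k$, both immediate from the definitions of the weighted geometric and arithmetic means. Subtracting, the increment telescopes to
\[
 f(n)-f(n-1) = w_n\ln A_n - W_n\ln\frac{S_n}{W_n} + W_{n-1}\ln\frac{S_{n-1}}{W_{n-1}}.
\]
Writing $P = S_{n-1}/W_{n-1} = M_{n-1,1}({\bf M}_{n-1,0})$ and using $S_n = W_{n-1}P + w_nG_n$, a division by $W_n$ followed by exponentiation shows that $f(n)\ge f(n-1)$ is equivalent to the single pointwise inequality
\begin{equation}
 A_n^{\,w_n/W_n}\,P^{\,W_{n-1}/W_n} \ge \frac{W_{n-1}P + w_nG_n}{W_n}.
 \tag{$*$}
\end{equation}

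Second, I would prove $(*)$ by freezing $x_1,\dots,x_{n-1}$, so that $A_{n-1}$, $G_{n-1}$ and $P$ become constants, and regarding the difference $F(t)$ of the two sides as a function of $t=x_n>0$, using that $A_n$ is affine in $t$ while $\ln G_n$ is affine in $\ln t$. Both sides of $(*)$ are homogeneous of degree one, so it suffices to treat the normalized case $P=1$. A one-variable computation locates an interior critical point $t_0$, characterized by $t_0\,P = A_nG_{n-1}$; provided $W_nP\ge w_nG_{n-1}$ (a consequence of the hypothesis, established below) the boundary behaviour as $t\to 0^+$ and $t\to\infty$ forces $F\to+\infty$ at both ends, so $t_0$ is the global minimum. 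Substituting $t_0$ then reduces the nonnegativity of $F$ to the clean inequality
\begin{equation}
 A_{n-1}^{\,w_n/W_n}\Big(\tfrac{W_nP - w_nG_{n-1}}{W_{n-1}}\Big)^{W_{n-1}/W_n} \ge P,
 \tag{$**$}
\end{equation}
which involves only the first $n-1$ variables.

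Third --- and this is where the hypothesis $W_nw_k - W_kw_n>0$ must enter --- I would establish $(**)$. Setting $Q := (W_nP-w_nG_{n-1})/W_{n-1}$ one has $P = \tfrac{w_n}{W_n}G_{n-1} + \tfrac{W_{n-1}}{W_n}Q$, so $(**)$ compares the weighted geometric mean of $(A_{n-1},Q)$ with the weighted arithmetic mean of $(G_{n-1},Q)$; since $A_{n-1}\ge G_{n-1}$ is itself only an AM--GM inequality, the naive bounds point the wrong way and $(**)$ is genuinely \emph{false} when $Q$ is too small (for very large $w_n$ one checks $Q$ can even become negative, and a numerical example shows $(*)$ can fail). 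Concretely, the concave map $Q\mapsto A_{n-1}^{\,w_n/W_n}Q^{\,W_{n-1}/W_n} - P$ is nonnegative exactly on an interval $[Q_-,Q_+]$ that contains both $G_{n-1}$ and $A_{n-1}$, and the whole difficulty is the estimate $Q\ge Q_-$, equivalently a sufficiently strong lower bound on $P=\tfrac1{W_{n-1}}\sum_{k=1}^{n-1}w_kG_k$ relative to $G_{n-1}$. I expect this lower bound to be precisely what the weight monotonicity $W_nw_k>W_kw_n$ supplies, and making it quantitative enough to yield $(**)$ is the step I anticipate to be the main obstacle. Finally, tracing the equality case backwards --- equality in AM--GM for $A_{n-1}\ge G_{n-1}$ together with $t=t_0$ --- pins the equality condition to $x_n = G_{n-1} = P$, that is $x_n = M_{n-1,0} = M_{n-1,1}({\bf M}_{n-1,0})$, as claimed.
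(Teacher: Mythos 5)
First, a point of context: the paper contains no proof of Theorem \ref{thm2} at all --- it is quoted as a known result of Kedlaya \cite{Ked1}, whose published proof proceeds by a combinatorial ``weight distribution'' argument (a simultaneous weighted AM--GM applied to an auxiliary array whose entries are nonnegative precisely because $W_nw_k>W_kw_n$), not by calculus. So your attempt must stand on its own, and it does not. What you have is correct as far as it goes: the identities $W_m\ln M_{m,0}({\bf M}_{m,1})=\sum_{k=1}^m w_k\ln A_k$ and $M_{m,1}({\bf M}_{m,0})=S_m/W_m$, the increment formula, and the equivalence with $(*)$ are all right; and your reduction of $(*)$ to $(**)$ via the critical point $t_0P=A_nG_{n-1}$ checks out. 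But your stated reasoning for ``$t_0$ is the global minimum'' is wrong: as $t\to 0^+$ the function $F$ tends to the \emph{finite} value $\bigl(W_{n-1}A_{n-1}/W_n\bigr)^{w_n/W_n}P^{W_{n-1}/W_n}-W_{n-1}P/W_n$, whose sign is not obvious, and as $t\to\infty$ both sides of $(*)$ grow like $t^{w_n/W_n}$, so $F\to+\infty$ there is itself an unproved coefficient inequality. This is repairable without boundary analysis: $F'$ vanishes exactly where $t/A_n(t)=G_{n-1}/P$, and since $t\mapsto t/A_n(t)$ increases from $0$ to $W_n/w_n$, there is a unique critical point with $F'<0$ before it and $F'>0$ after, provided $W_nP>w_nG_{n-1}$; the latter does follow from the hypothesis at $k=n-1$, since $W_{n-1}P\ge w_{n-1}G_{n-1}$ gives $W_nP\ge (W_nw_{n-1}/W_{n-1})G_{n-1}>w_nG_{n-1}$ (you assert this ``established below'' but never establish it).

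The genuine gap is that $(**)$ --- equivalently $A_{n-1}^{w_n/W_n}Q^{W_{n-1}/W_n}\ge \tfrac{w_n}{W_n}G_{n-1}+\tfrac{W_{n-1}}{W_n}Q$ with $Q=(W_nP-w_nG_{n-1})/W_{n-1}$ --- is never proved, and you say so yourself (``the step I anticipate to be the main obstacle''). This is not a loose end; it is the entire theorem. Your own analysis shows why it is hard: $Q\ge G_{n-1}$ is equivalent to $P\ge G_{n-1}$, which genuinely fails (with $w_i=1$, $x_1=\epsilon$, $x_2=1$ one has $P\approx\sqrt{\epsilon}/2<\sqrt{\epsilon}=G_2$), so one needs the finer bound $Q\ge Q_-$, and this must use the hypothesis for \emph{all} $2\le k\le n-1$ --- information about the whole history of the weights that your single-step framework has discarded after freezing $x_1,\dots,x_{n-1}$. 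Indeed $(**)$ has exactly the same shape as $(*)$ one index down (a weighted geometric mean of $(A_{n-1},Q)$ against the same-weight arithmetic combination of $(G_{n-1},Q)$), which strongly suggests the missing step is itself of Popoviciu type and requires an induction on $n$ or Kedlaya's global construction; no single AM--GM will close it. Finally, your equality tracing is also off: by your own concavity argument the zero set of $Q\mapsto A_{n-1}^{w_n/W_n}Q^{W_{n-1}/W_n}-P$ consists of the endpoints $Q_{\pm}$ of an interval \emph{strictly} containing $[G_{n-1},A_{n-1}]$ whenever $A_{n-1}>G_{n-1}$, so ``equality in AM--GM for $A_{n-1}\ge G_{n-1}$'' is not the right certificate. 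In fact the condition $x_n=G_{n-1}=P$ alone does not make the increment vanish: with $w_i=1$ and $x=(1,4,27/32)$ one has $G_3=P=3/2$, yet taking $x_4=3/2$ gives increment $w_4\ln(A_4/G_3)=\ln(1.8359\ldots/1.5)>0$. (This shows the theorem's equality clause must be read with $M_{n-1,0}$ abbreviating the mixed mean $M_{n-1,0}({\bf M}_{n-1,1})$, as in \cite{Ked1}; in any case the equality analysis can only be completed once a proof of $(**)$ is actually supplied.)
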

  It is easy to see that the case $r=1, s=0$ of Theorem \ref{thm1.0} follows from Theorem \ref{thm2}. As was pointed out by Kedlaya that the method used in \cite{Ked1} can be applied to establish both Popoviciu-type and Rado-type inequalities for mixed means for a general pair $r>s$. The details were worked out in \cite{T&T} and the following Rado-type inequalities were established in \cite{T&T}:
\begin{theorem}
\label{thm1'}
  Let $1 >s $ and $n \geq 2$. If for $
   2 \leq k \leq n-1$, $W_nw_k-W_kw_n>0$, then
\begin{equation*}
   W_{n-1}\Big (M_{n-1,s}({\bf M}_{n-1,1})-M_{n-1,1}({\bf M}_{n-1,s}) \Big ) \leq W_n \Big (M_{n,s}({\bf M}_{n,1})-M_{n,1}({\bf M}_{n,s}) \Big )
\end{equation*}
   with equality holding if and only if $x_1 = \cdots =x_n$ and the above
   inequality reverses when $s>1$.
\end{theorem}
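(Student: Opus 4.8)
The plan is to recognize that the asserted inequality is a one-step (Rado-type) statement about a single pair of consecutive indices, so that \emph{no outer induction is needed}: it is itself the increment. Write $A_k=M_{k,1}$ for the arithmetic means and set, for $s\neq 0$,
\[
P_n=M_{n,s}({\bf M}_{n,1})=\Big(\tfrac1{W_n}\sum_{k=1}^n w_kA_k^s\Big)^{1/s},\qquad Q_n=M_{n,1}({\bf M}_{n,s})=\tfrac1{W_n}\sum_{k=1}^n w_kM_{k,s}.
\]
The first thing I would record are the two telescoping identities $W_nQ_n-W_{n-1}Q_{n-1}=w_nM_{n,s}$ (which is immediate from the definition of $Q_n$) and $W_nP_n^{\,s}-W_{n-1}P_{n-1}^{\,s}=w_nA_n^{\,s}$. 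Subtracting the asserted two-sided quantity $W_{n-1}(P_{n-1}-Q_{n-1})\le W_n(P_n-Q_n)$ then reduces the theorem to the single inequality
\[
W_nP_n-W_{n-1}P_{n-1}\ \ge\ w_nM_{n,s},\qquad\text{call it }(*).
\]
Since $t\mapsto t^{1/s}$ is convex for $s<1$ and concave for $s>1$, the natural sense of $(*)$ flips exactly at $s=1$; this is what produces the claimed reversal, and it localizes the equality case at the diagonal $x_1=\dots=x_n$. So the whole task is to prove $(*)$ for $s<1$, the case $s>1$ being identical with every inequality reversed.

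To attack $(*)$ I would first isolate its two-point core. Setting $x_1=\dots=x_{n-1}=c$ and using homogeneity to take $c=1$, with $\theta=w_n/W_n$ and $A=(1-\theta)+\theta x_n$, the inequality $(*)$ collapses to
\[
\big(1-\theta+\theta A^s\big)^{1/s}\ \ge\ (1-\theta)+\theta\big(1-\theta+\theta x_n^{\,s}\big)^{1/s},
\]
which is precisely the $n=2$ mixed-mean inequality $M_{2,s}({\bf M}_{2,1})\ge M_{2,1}({\bf M}_{2,s})$ for the data $(1,x_n)$ with weights $(1-\theta,\theta)$ — a statement carrying no weight hypothesis. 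I would prove this by a one-variable argument. Viewing the deficit $f(x_n)=W_nP_n-W_{n-1}P_{n-1}-w_nM_{n,s}$ as a function of $x_n$ with all other variables held fixed, a direct computation gives
\[
f'(x_n)=\frac{w_n^2}{W_n}\Big[\big(P_n/A_n\big)^{1-s}-\big(M_{n,s}/x_n\big)^{1-s}\Big],
\]
so that, as $1-s>0$, the sign of $f'$ equals that of $P_n/A_n-M_{n,s}/x_n$, which vanishes on the diagonal. Thus $x_n=c$ is a critical point with $f=0$, and a short convexity/boundary analysis (together with $A_n=M_{n,1}\ge M_{n,s}$ from the power-mean inequality) identifies it as the unique minimum, giving the two-point core.

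The main obstacle, which I would tackle last, is transferring this clean two-point fact to the genuinely $n$-point, weighted setting — and this is exactly the step that must consume the hypothesis $W_nw_k-W_kw_n>0$ for $2\le k\le n-1$. The plan here is to show that the deficit $f$, regarded as a function of $(x_1,\dots,x_{n-1})$ with $x_n$ fixed, is minimized when these coordinates coincide, so that the problem is pushed down to the two-point case just settled; the weight condition is what orders the sensitivities of the running averages $A_k=M_{k,1}$ and keeps a smoothing (Schur-type) step consistent with the ordering. I expect this monotone-weight reduction to be the crux: it is the non-formal content of the theorem, it is where Kedlaya's device for the $r=1,s=0$ case (Theorem \ref{thm2}) must be reworked for a general exponent $s$, and it is indispensable, since $(*)$ is known to fail once the hypothesis on the weights is dropped. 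A convexity argument alone (for instance, the tangent-line bound coming from the homogeneous convex function $g(W,S)=W^{1-1/s}S^{1/s}$, for which $W_nP_n=g(W_n,S_n)$ with $S_n=\sum_{k\le n}w_kA_k^{\,s}$) is provably too weak even in the two-point case, so this reduction, rather than raw convexity, is where the real work lies.
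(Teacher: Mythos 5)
Your opening reduction is correct and matches how this theorem is actually handled: since $W_nQ_n-W_{n-1}Q_{n-1}=w_nM_{n,s}$ telescopes exactly, the Rado-type statement is equivalent to the single inequality $(*)$: $W_nM_{n,s}({\bf M}_{n,1})-W_{n-1}M_{n-1,s}({\bf M}_{n-1,1})\ge w_nM_{n,s}$ (reversed for $s>1$), and your derivative computation for the two-point core is accurate. The genuine gap is that you never prove $(*)$: everything after the two-point case is announced (``the plan here is to show\dots'', ``I expect this monotone-weight reduction to be the crux'') rather than executed, and the announced step is the entire content of the theorem. Moreover the proposed mechanism --- show that the deficit, as a function of $(x_1,\dots,x_{n-1})$, is minimized on the diagonal via a Schur-type smoothing --- faces a concrete obstruction you do not address: the deficit is \emph{not symmetric} in $(x_1,\dots,x_{n-1})$, because every running mean $A_k=M_{k,1}({\bf x}_k)$ depends on the initial segment ${\bf x}_k$, so exchanging $x_i$ with $x_j$ changes all intermediate $A_k$ and the standard smoothing/majorization formalism does not apply; there is no a priori reason the constrained minimizer is diagonal. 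You also exclude $s=0$ (your telescoping identity $W_nP_n^s-W_{n-1}P_{n-1}^s=w_nA_n^s$ degenerates there, though the theorem covers it), and the equality analysis is only gestured at.

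For comparison: the paper itself does not reprove this theorem (it cites Tarnavas--Tarnavas), but its own proof of Theorem \ref{thm8.2} exhibits precisely the template that closes $(*)$ without any minimization. The weight hypothesis $W_nw_k-W_kw_n>0$ is consumed once and for all in the convexity lemma (Lemma \ref{lem8.2}, applied with $f(x)=x^s$, convex or concave according to the range of $s$), which after taking $1/s$-th powers and using homogeneity yields
\begin{equation*}
W_{n-1}M_{n-1,s}({\bf M}_{n-1,1})\ \le\ M_{n,s}\big(W_n{\bf A}_n-w_n{\bf x}_n\big),
\end{equation*}
and then the Minkowski-type superadditivity of $M_{n,s}$ for $s<1$ (the power-mean analogue of Theorem \ref{thm8}) gives
\begin{equation*}
M_{n,s}\big(W_n{\bf A}_n-w_n{\bf x}_n\big)+M_{n,s}\big(w_n{\bf x}_n\big)\ \le\ M_{n,s}\big(W_n{\bf A}_n\big)=W_nM_{n,s}({\bf M}_{n,1}),
\end{equation*}
which, since $M_{n,s}(w_n{\bf x}_n)=w_nM_{n,s}$, is exactly $(*)$; the equality conditions are inherited from the two cited results, and both inequalities reverse for $s>1$. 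Until you can actually carry out your diagonal-minimization step (or replace it by an argument of this superadditivity type), your proposal establishes only the hypothesis-free two-point case, not the theorem.
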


%%----------------------------------------------------------------------------------
%%----------------------------------------------------------------------------------
  
   A different proof of Theorem \ref{thm1.0} for the case $r=1, s=0$ was given in \cite{Ked} and Bennett used essentially the same approach  in \cite{Ben1} and \cite{Ben2} to study \eqref{8.19} for the cases $(a_{i,j}), (b_{i,j})$ being lower triangular matrices, namely, $a_{i,j}=b_{i,j}=0$ if
$j>i$. Among other things, he showed \cite{Ben1} that inequalities \eqref{8.19} hold when $(a_{i,j}), (b_{i,j})$ are Hausdorff matrices. 

  Recently, Holland \cite{H} further improved the condition in Theorem \ref{thm1'} for the case $s=0$ by proving the following:
\begin{theorem}
\label{thm3}
  Let $n \geq 2$. If for $
   2 \leq k \leq n-1$, $W^2_k \geq w_{k+1}\sum^{k-1}_{i=1}W_i$, then
\begin{equation}
\label{1.3}
   W_{n-1}\Big (M_{n-1,0}({\bf M}_{n-1,1})-M_{n-1,1}({\bf M}_{n-1,0}) \Big ) \leq W_n \Big (M_{n,0}({\bf M}_{n,1})-M_{n,1}({\bf M}_{n,0}) \Big )
\end{equation}
   with equality holding if and only if $x_1 = \cdots =x_n$.
\end{theorem}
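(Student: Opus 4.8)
The plan is to recast \eqref{1.3} as the statement that a single sequence is nondecreasing and then isolate the one scalar inequality that carries the whole argument. Writing $A_i=M_{i,1}$, $g_i=M_{i,0}$, $G_n=M_{n,0}(\mathbf{M}_{n,1})=\prod_{i=1}^n A_i^{w_i/W_n}$, and noting $M_{n,1}(\mathbf{M}_{n,0})=W_n^{-1}\sum_{i=1}^n w_i g_i$, set
\[
D_n=W_n\Big(M_{n,0}(\mathbf{M}_{n,1})-M_{n,1}(\mathbf{M}_{n,0})\Big)=W_nG_n-\sum_{i=1}^n w_ig_i .
\]
Then \eqref{1.3} is exactly $D_{n-1}\le D_n$, and since $\sum w_ig_i$ telescopes this is equivalent to the single inequality $W_nG_n-W_{n-1}G_{n-1}\ge w_ng_n$. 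Using the elementary identity $G_n=G_{n-1}^{1-\theta}A_n^{\theta}$ with $\theta=w_n/W_n$ (obtained by comparing the exponents $w_i/W_n$ and $w_i/W_{n-1}$), this reduces to the pointwise inequality
\[
G_{n-1}^{1-\theta}A_n^{\theta}\ge(1-\theta)G_{n-1}+\theta g_n.\qquad(\star)
\]

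Next I would prove $(\star)$ by freezing $x_1,\dots,x_{n-1}$ and treating $x_n$ as the only variable. Here $A_n=(1-\theta)A_{n-1}+\theta x_n$ is affine in $x_n$ while $g_n=g_{n-1}^{1-\theta}x_n^{\theta}$, so $F(x_n):=G_{n-1}^{1-\theta}A_n^{\theta}-(1-\theta)G_{n-1}-\theta g_n$ is a concave term plus a convex term. A one-line differentiation shows $F$ has a unique interior critical point $x_n^{*}$, characterized by $x_n^{*}G_{n-1}=A_ng_{n-1}$, i.e. $x_n^{*}=(1-\theta)A_{n-1}g_{n-1}/(G_{n-1}-\theta g_{n-1})$, and that, provided $G_{n-1}>\theta g_{n-1}$, this point is the global minimum while $F\to+\infty$ as $x_n\to\infty$. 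Substituting $x_n^{*}$ and simplifying collapses $(\star)$ to the scalar inequality
\[
A_{n-1}^{\theta}\,(G_{n-1}-\theta g_{n-1})^{1-\theta}\ge(1-\theta)^{1-\theta}G_{n-1},\qquad(\star\star)
\]
involving only the three means $A_{n-1},g_{n-1},G_{n-1}$ of the first $n-1$ variables and the single parameter $\theta=w_n/W_n$; one checks directly that both $(\star)$ and $(\star\star)$ become equalities precisely when all variables coincide, matching the asserted equality case.

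The main obstacle is $(\star\star)$, and this is exactly where the hypothesis $W_k^2\ge w_{k+1}\sum_{i=1}^{k-1}W_i$ enters. Since $(\star\star)$ must hold for all positive $x_1,\dots,x_{n-1}$, it is really an inequality on the region swept out by the ratios $A_{n-1}/G_{n-1}$ and $g_{n-1}/G_{n-1}$ as the $x_i$ vary for fixed weights; a short computation shows it can fail once $\theta$ (equivalently $w_n$) is too large relative to $w_1,\dots,w_{n-1}$, so a condition of this type is unavoidable. I would establish $(\star\star)$ by induction on $n$, peeling off $x_{n-1}$ with the same minimization device: writing $A_{n-1},g_{n-1},G_{n-1}$ in terms of $A_{n-2},g_{n-2},G_{n-2}$ and $x_{n-1}$, minimizing over $x_{n-1}$, and reducing to the statement one level down. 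The case $k=n-1$ of the hypothesis is precisely what keeps the relevant coefficient nonnegative and secures the auxiliary bound $G_{n-1}>\theta g_{n-1}$ used above, so the induction closes. Threading the condition correctly through each level of this induction, and verifying that the base case and the requirement $G_{n-1}>\theta g_{n-1}$ are themselves consequences of the hypotheses, is the delicate part; everything else is elementary calculus and bookkeeping.
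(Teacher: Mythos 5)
Your opening reduction is sound and is in fact the same first step the paper takes: telescoping $\sum_i w_ig_i$ turns \eqref{1.3} into the single Rado-type step $W_nG_n({\bf A}_n)-W_{n-1}G_{n-1}({\bf A}_{n-1})\geq w_nG_n$, which is exactly the paper's \eqref{2.3}, and your identity $G_n({\bf A}_n)=G_{n-1}({\bf A}_{n-1})^{1-\theta}A_n^{\theta}$ with $\theta=w_n/W_n$ is the first identity in \eqref{1.6}. Your calculus is also correct as far as it goes: the critical-point relation $x_n^{*}\,G_{n-1}({\bf A}_{n-1})=A_ng_{n-1}$, the second-derivative check, and the collapsed inequality $(\star\star)$ all verify. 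But at that point the proof stops, and it stops precisely at the heart of the theorem: $(\star\star)$ is where the hypothesis $W_k^2\geq w_{k+1}\sum_{i=1}^{k-1}W_i$ must do its work, and you offer only a plan (``I would establish $(\star\star)$ by induction on $n$, peeling off $x_{n-1}$ with the same minimization device'') without carrying it out. The plan is moreover structurally dubious, because the statement being inducted on changes shape at each level: minimizing $(\star)$ over $x_n$ produces $(\star\star)$, which is \emph{not} of the form $(\star)$ one level down --- it involves the power $(G_{n-1}-\theta g_{n-1})^{1-\theta}$ rather than a linear combination $(1-\theta)G_{n-1}+\theta g_n$ --- so ``the same minimization device'' applied to $x_{n-1}$ inside $(\star\star)$ will not reproduce $(\star\star)$ at level $n-2$. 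You never exhibit a self-reproducing inductive statement, never prove the deferred auxiliary bound $G_{n-1}>\theta g_{n-1}$, and never show concretely where the hypothesis enters; the asserted equality characterization inherits all of these gaps. This is a genuine missing idea, not bookkeeping.

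For comparison, the paper closes exactly this gap with no calculus and no induction: after dividing the Rado step by $G_n({\bf A}_n)$, it substitutes $x_i=(W_iA_i-W_{i-1}A_{i-1})/w_i$ and changes variables to $y_i=A_i/A_{i+1}$, turning the whole inequality into the statement \eqref{1.8} that a sum of two weighted geometric means in $y_1,\dots,y_{n-1}$ is at most $1$. Two applications of the weighted arithmetic--geometric mean inequality, \eqref{1.9} and \eqref{1.10}, then finish by pure telescoping, with the hypothesis used only once --- in the single case $k=n-1$ --- to guarantee that the padding exponent $c_n=1-\sum_{i=1}^{n-1}W_iw_n/(W_{n-1}W_n)$ is nonnegative so that the factor $1^{c_n}$ can be inserted before applying AM--GM. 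If you want to rescue your route, the realistic fix is to abandon the induction on $(\star\star)$: your unconstrained minimization over $x_n$ alone treats $A_{n-1}$, $g_{n-1}$, $G_{n-1}({\bf A}_{n-1})$ as free parameters and thereby discards the internal structure of $G_{n-1}({\bf A}_{n-1})$ that the paper's change of variables exploits, which is precisely why you are left facing an inequality over an attainable region you cannot describe.
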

  
  It is our goal in this note to first give a simpler proof of the above result by modifying Holland's own approach. This is done in the next section and in Section \ref{sec 3}, we will prove a result of mixed mean inequality involving the symmetric means.

%%----------------------------------------------------------------------------
\section{A Proof of Theorem \ref{thm3}}
\label{sec 2} \setcounter{equation}{0}
%%----------------------------------------------------------------------------
   First, we recast \eqref{1.3} as
\begin{equation}
\label{2.3}
   G_{n}({\bf A}_{n})-\frac {W_{n-1}}{W_n}G_{n-1}({\bf A}_{n-1})-\frac {w_n}{W_n}G_n \geq 0.
\end{equation}
  We now note that
\begin{eqnarray}
  G_{n}({\bf A}_{n}) &=& \Big(G_{n-1}({\bf A}_{n-1}) \Big )^{W_{n-1}/W_n}A^{w_{n}/W_n}_n, \nonumber \\
\label{1.6}
  G_{n-1}({\bf A}_{n-1}) &=& A_n\prod^{n-1}_{i=1}\Big(\frac {A_i}{A_{i+1}}\Big )^{W_i/W_{n-1}}.
\end{eqnarray}
  We may assume that $x_k > 0, 1 \leq k \leq n$ and the case $x_k=0$ for some $k$ will follow by continuity. Thus on dividing $G_{n}({\bf A}_{n})$ on both sides of \eqref{2.3} and using \eqref{1.6}, we can recast \eqref{2.3} as:
\begin{equation}
\label{1.7}
   \frac {W_{n-1}}{W_n}\prod^{n-1}_{i=1}\Big(\frac {A_i}{A_{i+1}}\Big )^{W_iw_n/(W_{n-1}W_n)}+\frac {w_n}{W_n}\prod^{n}_{i=1}\Big(\frac {x_i}{A_{i}}\Big )^{w_i/W_n} \leq 1.
\end{equation}
   We now express $x_i=(W_iA_i-W_{i-1}A_{i-1})/w_i, 1\leq i \leq n$ with $W_0=A_0=0$ to recast \eqref{1.7} as
\begin{equation*}
   \frac {W_{n-1}}{W_n}\prod^{n-1}_{i=1}\Big(\frac {A_i}{A_{i+1}}\Big )^{W_iw_n/(W_{n-1}W_n)}+\frac {w_n}{W_n}\prod^{n}_{i=1}\Big(\frac {W_iA_i-W_{i-1}A_{i-1}}{w_iA_{i}}\Big )^{w_i/W_n} \leq 1.
\end{equation*}
  We now set $y_i=A_{i}/A_{i+1}$, $1 \leq i \leq 2$ to further recast the above inequality as
\begin{equation}
\label{1.8}
   \frac {W_{n-1}}{W_n}\prod^{n-1}_{i=1}y^{W_iw_n/(W_{n-1}W_n)}_i+\frac {w_n}{W_n}\prod^{n-1}_{i=1}\Big(\frac {W_{i+1}}{w_{i+1}}-\frac {W_{i}}{w_{i+1}}y_i\Big )^{w_{i+1}/W_n} \leq 1.
\end{equation}
   It now follows from the assumption of Theorem \ref{thm3} that
\begin{equation*}
   c_n=1-\sum^{n-1}_{i=1}\frac {W_iw_n}{W_{n-1}W_n} \geq 0,
\end{equation*}
   so that by the arithmetic-geometric mean inequality we have
\begin{equation}
\label{1.9}
   \prod^{n-1}_{i=1}y^{W_iw_n/(W_{n-1}W_n)}_i=1^{c_n} \prod^{n-1}_{i=1}y^{W_iw_n/(W_{n-1}W_n)}_i\leq \sum^{n-1}_{i=1}\frac {W_iw_ny_i}{W_{n-1}W_n}+1-\sum^{n-1}_{i=1}\frac {W_iw_n}{W_{n-1}W_n}.
\end{equation}
   Similarly, we have
\begin{equation}
\label{1.10}
   \prod^{n-1}_{i=1}\Big(\frac {W_{i+1}}{w_{i+1}}-\frac {W_{i}}{w_{i+1}}y_i\Big )^{w_{i+1}/W_n} \leq \sum^{n-1}_{i=1}\frac {w_{i+1}}{W_n}\Big(\frac {W_{i+1}}{w_{i+1}}-\frac {W_{i}}{w_{i+1}}y_i\Big )+\frac {w_1}{W_n}.
\end{equation}
   Now it is easy to see that inequality \eqref{1.8} follows on adding inequalities \eqref{1.9} and \eqref{1.10} and this completes the proof of Thorem \ref{thm3}.

%%----------------------------------------------------------------------------
\section{A Discussion on Symmetric means}
\label{sec 3} \setcounter{equation}{0}
%%----------------------------------------------------------------------------
  Let $0 \leq r \leq n $,  we recall that the $r$-th symmetric function
$E_{n,r}$ of ${\bf x}$ and its mean $P_{n,r}$ is defined by
\begin{equation*}
    E_{n,r}({\bf x})=\sum_{1 \leq i_1< \cdots <i_r \leq
    n}\prod_{j=1}^rx_{i_j}, P^r_{n,r}({\bf x})=\frac {E_{n,r}({\bf x})}{\binom{n}{r}}, 1\leq r \leq n; E_{n,0}=P_{n,0}=1.
\end{equation*}
   It is well-known that for fixed ${\bf x}$ of dimension $n$, $P_{n,r}$ is a non-increasing function of $r$ for $1 \leq r \leq n$ with $P_{n,1}=A_n, P_{n,n}=G_n$ (with weights $w_i=1$, $1 \leq i \leq n$). In view of the mixed mean inequalities for the generalized weighted power means (Theorem \ref{thm1.0}), it is natural to ask whether similar results hold for the symmetric means. Of course one may have to adjust the notion of such mixed means in order for this to make sense for all $n$. For example, when $r=3$, $n=2$, the notion of $P_{2,3}$ is not even defined. From now on we will only focus on the extreme cases of the symmetric means, namely $r=2$ or $r=n-1$. In these cases it is then natural to define $P_{1,2}=x_1$ and on recasting $P_{n,n-1}=G^{n/(n-1)}_n/H^{1/(n-1)}_n$, we see that it also natural for us to define $P_{1,0}=x_1$ (note that this is not consistent with our definition of $P_{n,0}$ above).

  We now prove a mixed mean inequality involving $P_{n,2}$ and $A_n$. We first note the following result of Marcus and Lopes \cite{M&L} (see also pp. 33-35 in \cite{B&B}):
\begin{theorem}
\label{thm8}
   Let $0  < r \leq n$ and $x_i, y_i >0$ for $i=1,2, \cdots, n$,
   then
\begin{equation*}
\label{8.160}
    P_{n,r}({\bf x}+{\bf y}) \geq P_{n,r}({\bf x})+P_{n,r}({\bf
    y}),
\end{equation*}
   with equality holding if and only if $r=1$ or there exists a
   constant $\lambda$ such that ${\bf x}=\lambda {\bf y}$.
\end{theorem}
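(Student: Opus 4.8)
The plan is to reduce the superadditivity of $P_{n,r}$ to two independent facts: the superadditivity of the ratios $\Phi_j := E_{n,j}/E_{n,j-1}$ of consecutive elementary symmetric functions, and the superadditivity of the geometric mean. The bridge between them is the telescoping identity $E_{n,r} = \prod_{j=1}^r \Phi_j$ (using $E_{n,0}=1$), which yields
\[
   P_{n,r}({\bf x}) = \binom{n}{r}^{-1/r}\Big(\prod_{j=1}^r \Phi_j({\bf x})\Big)^{1/r}.
\]
Granting that each $\Phi_j$ is superadditive, i.e. $\Phi_j({\bf x}+{\bf y})\ge\Phi_j({\bf x})+\Phi_j({\bf y})$, I would finish as follows: since $t\mapsto t^{1/r}$ is increasing and all factors are positive,
\[
   \Big(\prod_{j=1}^r \Phi_j({\bf x}+{\bf y})\Big)^{1/r}
   \ge \Big(\prod_{j=1}^r \big(\Phi_j({\bf x})+\Phi_j({\bf y})\big)\Big)^{1/r}
   \ge \Big(\prod_{j=1}^r \Phi_j({\bf x})\Big)^{1/r}+\Big(\prod_{j=1}^r \Phi_j({\bf y})\Big)^{1/r},
\]
the last step being the superadditivity of the geometric mean, a standard consequence of the arithmetic--geometric mean inequality. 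Multiplying through by the constant $\binom{n}{r}^{-1/r}$ then gives the theorem.

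The main obstacle is the superadditivity of the ratios $\Phi_j$, which I would establish by induction on the number of variables $n$ using the Pascal-type recursion $E_{n,j}({\bf x}) = E_{n-1,j}({\bf x}') + x_n E_{n-1,j-1}({\bf x}')$, where ${\bf x}'=(x_1,\dots,x_{n-1})$. Setting $A=E_{n-1,j}({\bf x}')$, $B=E_{n-1,j-1}({\bf x}')$, $C=E_{n-1,j-2}({\bf x}')$, this recursion exhibits $\Phi_j({\bf x})=(A+x_nB)/(B+x_nC)$ as a M\"obius function of $x_n$ whose coefficients are controlled by the lower ratios $A/B=\Phi_j({\bf x}')$ and $B/C=\Phi_{j-1}({\bf x}')$. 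The inductive step then reduces to a rational inequality comparing $\Phi_j({\bf x}+{\bf y})$ with $\Phi_j({\bf x})+\Phi_j({\bf y})$, which I would settle by feeding in the two inductive hypotheses (superadditivity of $\Phi_j$ and $\Phi_{j-1}$ in $n-1$ variables) together with the Newton ordering $\Phi_{j-1}\ge\Phi_j$, equivalently $E_{n-1,j-1}^2\ge E_{n-1,j}E_{n-1,j-2}$. The base case $n=j$ reduces to the superadditivity of the harmonic mean, since $E_{j,j}/E_{j,j-1}=(\sum_i x_i^{-1})^{-1}$. I expect this rational inequality to be the delicate part; note in particular that the naive shortcut of declaring the map $(u,v,t)\mapsto v(u+t)/(v+t)$ jointly concave and invoking the composition rule for concave functions fails, so the inequality really must be handled by hand on the region cut out by the Newton ordering.

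Finally, for the equality clause I would trace the two inequalities backward. When $r=1$ we have $P_{n,1}=A_n$, which is linear, so equality is automatic; this is the first alternative in the statement. For $r\ge 2$, equality in the geometric-mean step forces the vectors $(\Phi_1({\bf x}),\dots,\Phi_r({\bf x}))$ and $(\Phi_1({\bf y}),\dots,\Phi_r({\bf y}))$ to be proportional, while equality in the first nontrivial ratio superadditivity $\Phi_2({\bf x}+{\bf y})=\Phi_2({\bf x})+\Phi_2({\bf y})$ (examined through the induction) forces ${\bf x}=\lambda{\bf y}$; conversely ${\bf x}=\lambda{\bf y}$ gives equality by homogeneity. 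I remark that the whole statement can alternatively be deduced from the concavity of $E_{n,r}^{1/r}$ on the positive orthant, from which superadditivity follows because this function is positively homogeneous of degree one; that concavity is a consequence of the theory of hyperbolic polynomials, but the elementary route above keeps the equality analysis more transparent.
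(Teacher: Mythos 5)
First, a point of reference: the paper does not prove this theorem at all --- it quotes it from Marcus and Lopes \cite{M&L}, with the proof in Beckenbach--Bellman, pp.\ 33--35. Your outer skeleton (the telescoping $E_{n,r}=\prod_{j=1}^r\Phi_j$, superadditivity of each ratio $\Phi_j=E_{n,j}/E_{n,j-1}$, then the Minkowski-type product inequality $\prod_j(a_j+b_j)^{1/r}\geq \prod_j a_j^{1/r}+\prod_j b_j^{1/r}$) is in fact exactly the skeleton of that classical proof, so the whole weight rests on your proof of the ratio lemma, and that is where there is a genuine gap. Your induction on $n$ carries forward only three facts about the $(n-1)$-variable data: superadditivity of $\Phi_j$, superadditivity of $\Phi_{j-1}$, and the ordering $u\leq v$, $u^*\leq v^*$, where $u=A/B$, $v=B/C$ for the ${\bf x}'$-triple and $u^*,v^*$ are the analogues for ${\bf y}'$. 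Your monotonicity observation is correct: writing $\Phi_j({\bf x}+{\bf y})=\bigl((A''/B'')+(t+s)\bigr)/\bigl(1+(t+s)(C''/B'')\bigr)$ with $t=x_n$, $s=y_n$, this is increasing in $A''/B''$ and in $B''/C''$, so the strongest conclusion your hypotheses can yield is
\begin{equation*}
\frac{(v+v^*)\bigl(u+u^*+t+s\bigr)}{(v+v^*)+(t+s)}\ \geq\ \frac{v(u+t)}{v+t}+\frac{v^*(u^*+s)}{v^*+s},
\end{equation*}
for all $0<u\leq v$, $0<u^*\leq v^*$, $t,s\geq 0$. This inequality is false: take $u\to 0$, $v=1$, $t=1$ and $u^*=v^*=1$, $s\to 0$; the left side tends to $4/3$ while the right side tends to $1/2+1=3/2$, and the violation is robust under small perturbations, so it is not a boundary artifact. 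Hence the inductive step cannot be ``settled by feeding in the two inductive hypotheses together with the Newton ordering'': the induction simply does not transport enough information, which is a wrong-approach problem rather than a missing computation.

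A repair would have to carry strictly more structure. For instance, the quantitative Newton inequality $E_{m,j-1}^2\geq \frac{\binom{m}{j-1}^2}{\binom{m}{j}\binom{m}{j-2}}E_{m,j}E_{m,j-2}$ (with $m=n-1$) gives $v^*\geq\kappa u^*$ with $\kappa>1$, but even this is borderline: for $j=2$ and $m$ large one has $\kappa\to 2$, and the test point $u=0$, $v=1$, $t=1$, $u^*=1$, $v^*=2$, $s=0$ gives exact equality $3/2=3/2$ in the displayed inequality, so any completion along these lines would be a delicate constant-chasing argument with no slack. The classical proof sidesteps all of this by inducting on $r$ rather than $n$, using the elimination identities $E_r({\bf x})=E_r^{(i)}({\bf x})+x_iE_{r-1}^{(i)}({\bf x})$ and a mediant/averaging argument; alternatively, the route you mention in passing --- concavity of $E_{n,r}^{1/r}$ on the positive orthant (G\aa rding's theory, or Marcus--Lopes itself) plus degree-one homogeneity --- is a complete and correct proof of the inequality, though you would still owe the equality analysis, which in your sketch is also only heuristic (the claim that equality in the $\Phi_2$ step forces ${\bf x}=\lambda{\bf y}$ is asserted, not derived). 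As written, the proposal does not constitute a proof.
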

   We also need the following Lemma of C. Tarnavas and D. Tarnavas \cite{T&T}.
\begin{lemma}
\label{lem8.2}
   Let $f: R^1 \rightarrow R^1$ be a convex function and suppose for $n \geq
   2, 1 \leq k \leq n-1$, $W_nw_k-W_kw_n>0$. Then
\begin{equation*}
\label{8.7}
   \frac 1{W_{n-1}}\sum^{n-1}_{k=1}w_kf(W_{n-1}A_k) \geq \frac
   1{W_n}\sum^{n}_{k=1}w_kf(W_nA_k-w_nx_k).
\end{equation*}
   The equality holds if and only if $n=2$ or $x_1=\cdots=x_n$ when $f(x)$ is strictly
   convex. When $f(x)$ is concave, then the
   above inequality is reversed.
\end{lemma}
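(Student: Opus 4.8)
The plan is to divide through by the respective total weights and read each side as a weighted average of $f$ over a finite set of points. Writing $W_kA_k=\sum_{i=1}^kw_ix_i$ and setting $b_k=W_{n-1}A_k$ for $1\le k\le n-1$ (the arguments on the left) and $c_k=W_nA_k-w_nx_k$ for $1\le k\le n$ (those on the right), the weights $w_k/W_{n-1}$ and $w_k/W_n$ each sum to $1$, so both sides are genuine averages; moreover, replacing $f$ by the identity shows that the two averages share the same mean $\sum_{k=1}^{n-1}w_kA_k$. This equality of means is exactly the compatibility one needs, and it signals that the statement is an instance of the convex order: the left-hand points are a spread-out version of the right-hand ones. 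I would make this precise not through abstract majorization but by exhibiting each $c_k$ explicitly as a convex combination of the $b$'s.

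First I would record the two boundary identities $c_1=W_{n-1}A_1=b_1$ (using $A_1=x_1$ and $W_n-w_n=W_{n-1}$) and $c_n=W_nA_n-w_nx_n=W_{n-1}A_{n-1}=b_{n-1}$. The decisive step is then to substitute $x_k=(W_kA_k-W_{k-1}A_{k-1})/w_k$ into $c_k$ and collect terms, which for $1\le k\le n-1$ yields
\begin{equation*}
   c_k=\frac{W_nw_k-W_kw_n}{w_k}A_k+\frac{w_nW_{k-1}}{w_k}A_{k-1}=\lambda_kb_k+(1-\lambda_k)b_{k-1},
\end{equation*}
where $b_0=0$ and $\lambda_k=(W_nw_k-W_kw_n)/(w_kW_{n-1})$. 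Here the hypothesis $W_nw_k-W_kw_n>0$ is used precisely to guarantee $\lambda_k\in(0,1]$, so that each $c_k$ really is a convex combination of the consecutive arguments $b_{k-1},b_k$. Finding this decomposition and checking that its coefficients have the right sign and sum is the main obstacle; once it is in place the remainder is bookkeeping.

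With the decomposition in hand, convexity gives $f(c_k)\le\lambda_kf(b_k)+(1-\lambda_k)f(b_{k-1})$ for $1\le k\le n-1$, while $f(c_n)=f(b_{n-1})$ exactly. I would multiply by $w_k$, sum over $k$, and reindex the $f(b_{k-1})$ contributions using $w_k\lambda_k=(W_nw_k-W_kw_n)/W_{n-1}$ together with $w_k(1-\lambda_k)=w_nW_{k-1}/W_{n-1}$; the coefficient of each $f(b_k)$ then collapses to $W_nw_k/W_{n-1}$, with the boundary contributions of $c_1$ and $c_n$ filling in the two ends of the telescoped sum, so that $\frac1{W_n}\sum_{k=1}^nw_kf(c_k)\le\frac1{W_{n-1}}\sum_{k=1}^{n-1}w_kf(b_k)$, which is the claim. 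For the equality statement with $f$ strictly convex, I would trace equality back through each applied inequality: for $2\le k\le n-1$ one has $\lambda_k\in(0,1)$, forcing $b_{k-1}=b_k$, i.e. $A_{k-1}=A_k$, so the $A_k$ must all coincide and hence the $x_k$ are equal, whereas for $n=2$ both sides reduce to $f(W_1x_1)$ and equality is automatic. The concave case is handled identically with every inequality reversed.
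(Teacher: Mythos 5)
The paper itself offers no proof of this lemma: it is quoted verbatim from Tarnavas--Tarnavas \cite{T&T}, so there is no internal argument to compare against. Your proof is correct and self-contained, and it is in substance the standard argument for this result: the verifications all check out. Indeed $c_1=W_nx_1-w_nx_1=W_{n-1}x_1=b_1$ and $c_n=W_nA_n-w_nx_n=W_{n-1}A_{n-1}=b_{n-1}$; your coefficients satisfy $\lambda_k+\frac{w_nW_{k-1}}{w_kW_{n-1}}=\frac{W_nw_k-w_n(W_k-W_{k-1})}{w_kW_{n-1}}=1$ since $W_n-w_n=W_{n-1}$; and after summing, the coefficient of $f(b_k)$ for $1\le k\le n-2$ is $\frac{W_nw_k-W_kw_n}{W_{n-1}}+\frac{w_nW_k}{W_{n-1}}=\frac{W_nw_k}{W_{n-1}}$, while the $k=n-1$ coefficient picks up the extra $w_nf(c_n)=w_nf(b_{n-1})$ term and collapses to $\frac{W_nw_{n-1}}{W_{n-1}}$ as well, giving exactly the claimed inequality. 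Noticing that both sides have the same mean and exhibiting each $c_k$ as a convex combination of the consecutive $b_{k-1},b_k$ is precisely the right mechanism, and the hypothesis $W_nw_k-W_kw_n>0$ enters exactly where you use it.

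Two small caveats in the equality analysis. First, since $c_n=b_{n-1}$ identically, both sides of the lemma are in fact independent of $x_n$ (on the right, $x_n$ cancels inside $c_n$ and appears nowhere else), so tracing equality through your inequalities yields only $A_1=\cdots=A_{n-1}$, i.e.\ $x_1=\cdots=x_{n-1}$, with $x_n$ unconstrained; your closing claim that ``the $x_k$ are equal'' --- and indeed the lemma's own stated condition --- overstates this. For instance, $n=3$ with unit weights, $x_1=x_2$ and $x_3$ arbitrary gives equality on both sides. Second, your assertion that $\lambda_k\in(0,1)$ for $2\le k\le n-1$ requires $w_n>0$ (otherwise $\lambda_k=1$ and the two sides coincide identically); this is implicit in the paper's standing assumptions on the weights but does not follow from the displayed hypothesis alone, and is worth flagging.
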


   We now apply Lemma \ref{lem8.2} to obtain
\begin{lemma}
\label{lem8.3}
   For $n \geq 2$ and $w_i=1$, $1 \leq i \leq n$,
\begin{equation*}
   P_{n-1,2}((n-1){\bf A}_{n-1})  \leq  P_{n,2}(n{\bf A}_n-{\bf
   x}_n).
\end{equation*}
with equality holding in both cases if and only if $n=2$ or $x_1= \cdots
=x_n$.
\end{lemma}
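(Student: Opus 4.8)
The plan is to square both symmetric means and thereby reduce the statement to a comparison of power sums to which Lemma \ref{lem8.2} applies directly. Throughout $w_i=1$, so $W_k=k$ and $A_k=\frac1k\sum_{j=1}^kx_j$; every entry of the two vectors is nonnegative, since $nA_k-x_k\ge kA_k-x_k=\sum_{j=1}^{k-1}x_j\ge 0$, so the asserted inequality is equivalent to its squared form (for $n\ge 3$; the boundary case $n=2$ I would check directly from the convention $P_{1,2}=x_1$, where both sides equal $x_1$). First I would record the elementary identity
\begin{equation*}
  m(m-1)P_{m,2}^2({\bf z})=2E_{m,2}({\bf z})=\Big(\sum_{i=1}^m z_i\Big)^2-\sum_{i=1}^m z_i^2 ,
\end{equation*}
and apply it to ${\bf z}=(n-1){\bf A}_{n-1}$ and to ${\bf z}=n{\bf A}_n-{\bf x}_n$. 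Writing $S=\sum_{k=1}^{n-1}A_k$, $Q=\sum_{k=1}^{n-1}A_k^2$ and $R=\sum_{k=1}^n(nA_k-x_k)^2$, the left side becomes $P_{n-1,2}^2=\frac{(n-1)(S^2-Q)}{n-2}$.

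The key simplification on the right is the telescoping $\sum_{k=1}^n(nA_k-x_k)=n\sum_{k=1}^nA_k-\sum_{k=1}^nx_k=nS$, where I use $\sum_{k=1}^nx_k=nA_n$; this yields $P_{n,2}^2=\frac{n^2S^2-R}{n(n-1)}$. Clearing the positive denominators, the desired inequality collapses to
\begin{equation*}
  (n-2)R\le n(n-1)^2Q-nS^2 ,
\end{equation*}
the only mildly delicate computation being that the coefficient of $S^2$ simplifies to $(n-2)n^2-n(n-1)^2=-n$.

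It then remains to supply two bounds. Applying Lemma \ref{lem8.2} to the strictly convex $f(t)=t^2$ gives $\frac1{n-1}\sum_{k=1}^{n-1}\big((n-1)A_k\big)^2\ge\frac1n\sum_{k=1}^n(nA_k-x_k)^2$, that is $R\le n(n-1)Q$; and Cauchy--Schwarz gives $S^2=\big(\sum_{k=1}^{n-1}A_k\big)^2\le(n-1)\sum_{k=1}^{n-1}A_k^2=(n-1)Q$. Chaining these, $(n-2)R\le(n-2)n(n-1)Q\le n(n-1)^2Q-nS^2$, the last step being exactly $S^2\le(n-1)Q$ after rearrangement, which proves the reduced inequality.

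For the equality case I would track both bounds: Cauchy--Schwarz is an equality iff $A_1=\cdots=A_{n-1}$, while for $n\ge 3$ strict convexity of $t^2$ forces equality in Lemma \ref{lem8.2} iff $x_1=\cdots=x_n$; since the latter implies the former, equality holds iff $x_1=\cdots=x_n$, and $n=2$ always gives equality, matching the statement. The main obstacle here is purely organizational---keeping the reduction to the single inequality clean and confirming the cancellation in the $S^2$-coefficient---rather than conceptual; in particular I do not expect to need the Marcus--Lopes superadditivity (Theorem \ref{thm8}) for this lemma.
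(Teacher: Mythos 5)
Your proof is correct and is essentially the paper's own argument in different packaging: the telescoping identity $\sum_{k=1}^n(nA_k-x_k)=nS$, the application of Lemma \ref{lem8.2} with $f(t)=t^2$, and your Cauchy--Schwarz bound $S^2\le(n-1)Q$ (which, once expanded, is exactly the paper's step $M_{n-1,2}\ge P_{n-1,1}\ge P_{n-1,2}$ applied to the vector $(n-1){\bf A}_{n-1}$) are precisely the paper's three ingredients, with your explicit $S,Q,R$ bookkeeping replacing the paper's cross-term expansion of $(n\sum a_i)^2=((n-1)\sum b_i)^2$. Your equality analysis and your observation that Theorem \ref{thm8} is not needed here also match the paper.
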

\begin{proof}
   The case $n=2$ yields an identity so we may assume $n \geq 3$ here. Write $a_i=(n-1)A_i, 1 \leq i \leq n-1; b_j=nA_j-x_j, 1 \leq j
   \leq n$. Note $n\sum^{n-1}_{i=1}a_i=(n-1)\sum^n_{i=1}b_i$ and now
   Lemma \ref{lem8.2} with $f(x)=x^2$ implies
   $(n-1)\sum^n_{i=1}b^2_i \leq n\sum^{n-1}_{i=1}a^2_i$.
   On expanding
   $(n\sum^{n-1}_{i=1}a_i)^2=((n-1)\sum^n_{i=1}b_i)^2$, we obtain
\begin{eqnarray*}
\label{8.11}
   n^2\sum^{n-1}_{i=1}a^2_i+2n^2\sum_{1 \leq i \neq j \leq
   n-1}a_ia_j &=& (n-1)^2\sum^n_{i=1}b^2_i+2(n-1)^2\sum_{1 \leq i \neq j \leq
   n}b_ib_j \\
   & \leq & n(n-1)\sum^{n-1}_{i=1}a^2_i+2(n-1)^2\sum_{1 \leq i \neq j \leq
   n}b_ib_j.
\end{eqnarray*}
   Hence
\begin{equation}
\label{8.12}
   n\sum^{n-1}_{i=1}a^2_i+2n^2\sum_{1 \leq i \neq j \leq
   n-1}a_ia_j \leq 2(n-1)^2\sum_{1 \leq i \neq j \leq
   n}b_ib_j.
\end{equation}
   Using $M_{n,2} \geq A_n=P_{n,1} \geq P_{n,2}$, we obtain
\begin{equation*}
\label{8.13}
   \frac 1{n-1}\sum^{n-1}_{i=1}a^2_i \geq
   \frac 1{\binom{n-1}{2}}\sum_{1\leq i \neq j \leq n-1}a_ia_j.
\end{equation*}
   So by \eqref{8.12},
\begin{equation*}
\label{8.14}
   \frac 1{\binom{n-1}{2}}\sum_{1\leq i \neq j \leq n-1}a_ia_j \leq \frac 1{\binom{n}{2}}\sum_{1\leq i \neq j \leq
   n}b_ib_j,
\end{equation*}
   which is just what we want.
\end{proof}

   We now prove the following mixed mean inequality involving the symmetric
   means:
\begin{theorem}
\label{thm8.2}
   Let $n \geq 1$ and define ${\bf P}_{n,2}=(P_{1,2}, \cdots,
P_{n,2})$, then
\begin{equation}
\label{8.10'}
    (n-1) \Big (P_{n-1,2}(
   {\bf P}_{n-1,1})-P_{n-1,1}({\bf P}_{n-1,2}) \Big )  \leq  n \Big (P_{n,2}({\bf P}_{n,1})-P_{n,1}({\bf P}_{n,2}) \Big ),
\end{equation}
  with equality holding if and only if $x_1 = \cdots =x_n$. It
  follows that
\begin{equation*}
\label{8.10}
   P_{n,1}({\bf P}_{n,2}) \leq P_{n,2}({\bf P}_{n,1}),
\end{equation*}
   with equality holding if and only if $x_1 = \cdots =x_n$.
\end{theorem}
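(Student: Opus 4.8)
The plan is to prove the telescoping inequality \eqref{8.10'} first and then deduce the unlabelled consequence from it by summation. Throughout $w_i=1$, so $P_{k,1}=A_k$, the vector ${\bf P}_{n,1}$ coincides with ${\bf A}_n$, and $P_{n,1}({\bf P}_{n,2})=\frac1n\sum_{k=1}^nP_{k,2}$; I will use repeatedly that $P_{n,r}$ is positively homogeneous of degree one. First I would set
\begin{equation*}
   D_n=n\Big(P_{n,2}({\bf P}_{n,1})-P_{n,1}({\bf P}_{n,2})\Big)=nP_{n,2}({\bf A}_n)-\sum_{k=1}^nP_{k,2}.
\end{equation*}
Inequality \eqref{8.10'} is precisely the statement $D_{n-1}\le D_n$, and a direct check gives $D_1=0$ (since $P_{1,2}=x_1$). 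Hence, once \eqref{8.10'} is established, $0=D_1\le D_2\le\cdots\le D_n$, which is exactly the asserted consequence $P_{n,1}({\bf P}_{n,2})\le P_{n,2}({\bf P}_{n,1})$.

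So the heart of the matter is \eqref{8.10'}. The common sum $\sum_{k=1}^{n-1}P_{k,2}$ cancels between $D_n$ and $D_{n-1}$, reducing $D_{n-1}\le D_n$ to
\begin{equation*}
   nP_{n,2}({\bf A}_n)\ge (n-1)P_{n-1,2}({\bf A}_{n-1})+P_{n,2}.
\end{equation*}
To obtain this, I would decompose $n{\bf A}_n=(n{\bf A}_n-{\bf x}_n)+{\bf x}_n$ and apply the superadditivity of Theorem \ref{thm8} with $r=2$, together with homogeneity, to get $nP_{n,2}({\bf A}_n)=P_{n,2}(n{\bf A}_n)\ge P_{n,2}(n{\bf A}_n-{\bf x}_n)+P_{n,2}$. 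Homogeneity likewise recasts Lemma \ref{lem8.3} as $(n-1)P_{n-1,2}({\bf A}_{n-1})=P_{n-1,2}((n-1){\bf A}_{n-1})\le P_{n,2}(n{\bf A}_n-{\bf x}_n)$, and chaining the two bounds gives the displayed reduction. One point to record beforehand is that every entry $nA_k-x_k$ is positive, so that both Theorem \ref{thm8} and Lemma \ref{lem8.3} genuinely apply.

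The hard part will be the equality analysis, precisely because two distinct inequalities are chained. For $n\ge 3$, Lemma \ref{lem8.3} already forces $x_1=\cdots=x_n$, so it remains only to examine the superadditivity step: since $r=2\neq1$, equality in Theorem \ref{thm8} requires $n{\bf A}_n-{\bf x}_n=\lambda{\bf x}_n$ for some constant $\lambda$, i.e. $nA_k=(1+\lambda)x_k$ for every $k$. Evaluating at $k=1$, where $A_1=x_1$, forces $\lambda=n-1$, hence $A_k=x_k$ for all $k$; feeding this into $kA_k=(k-1)A_{k-1}+x_k$ yields $x_k=x_{k-1}$, so again $x_1=\cdots=x_n$. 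The degenerate case $n=2$, in which Lemma \ref{lem8.3} is an identity, must be treated by hand: there $2{\bf A}_2-{\bf x}_2=(x_1,x_1)$ and $P_{2,2}$ is the geometric mean, so equality in superadditivity gives $x_1=x_2$. This pins down the equality condition for \eqref{8.10'}, and the equality condition for the consequence then follows because $D_n=0$ forces the final increment $D_n-D_{n-1}$ to vanish.
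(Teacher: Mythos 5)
Your proposal is correct and takes essentially the same route as the paper: reducing \eqref{8.10'} to $nP_{n,2}({\bf A}_n)\ge (n-1)P_{n-1,2}({\bf A}_{n-1})+P_{n,2}$ and establishing that via Lemma \ref{lem8.3} chained with the superadditivity of Theorem \ref{thm8} applied to the decomposition $n{\bf A}_n=(n{\bf A}_n-{\bf x}_n)+{\bf x}_n$ is exactly the paper's argument, with your telescoping quantity $D_n$ just making the summation explicit. Your one genuine addition is the worked-out equality analysis (the $\lambda=n-1$ computation forcing $A_k=x_k$, plus the separate $n=2$ check), which is sound and fills in a case discussion the paper merely asserts.
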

\begin{proof}
    It suffices to prove \eqref{8.10'} here.  We may assume $n \geq 2$ here and we shall use the idea in \cite{T&T}. Lemma \ref{lem8.3} implies that
\begin{eqnarray*}
\label{8.15}
   P_{n,2}+ (n-1) P_{n-1,2}(
   {\bf P}_{n-1,1}) & \leq & P_{n,2}+ P_{n,2}(n{\bf A}_n-{\bf x}_n) \\
   &\leq&   P_{n,2}(n{\bf A}_n-{\bf x}_n+{\bf x}_n)
   = nP_{n,2}({\bf P}_{n,1}),
\end{eqnarray*}
   where the last inequality follows from Theorem \ref{thm8} for the case $r=2$. It is easy to see that the above inequality is equivalent to \eqref{8.10} and this completes the proof.
\end{proof}

    Now we let $n \geq 1$ and define ${\bf P}_{n, n-1}=(P_{1,0}, \cdots,
P_{n,n-1})$ with $P_{1,0}=x_1$ here. Then it is interesting to see whether the following inequality holds or not:
\begin{equation*}
   P_{n,1}({\bf P}_{n, n-1}) \leq P_{n, n-1}({\bf P}_{n,1}).
\end{equation*}
   We note here that if the above inequality holds, then it is easy to deduce from it via the approach in \cite{C&P} the following Hardy-type inequality:
\begin{equation*}
   \sum^n_{i=1} \frac {G^{i/(i-1)}_i}{H^{1/(i-1)}_i}({\bf x}_i) \leq e \sum^{n}_{i=1}x_i,
\end{equation*}
  where we define $G^{1/0}_1/H^{1/0}_1=x_1$. We now end this paper by proving the following result:
\begin{theorem}
  Let $n \geq 1$ and ${\bf x} \geq {\bf 0}$. Then
\begin{equation*}
   \sum^n_{i=1} \frac {G^{i/(i-1)}_i}{H^{1/(i-1)}_i}({\bf x}_i) \leq 3 \sum^{n}_{i=1}x_i,
\end{equation*}  
   where we define $G^{1/0}_1/H^{1/0}_1=x_1$.
\end{theorem}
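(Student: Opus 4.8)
The plan is to first strip away the $G/H$ notation. Since the paper has recast $P_{i,i-1}=G_i^{i/(i-1)}/H_i^{1/(i-1)}$ and adopted the convention $P_{1,0}=x_1$, the $i$-th summand on the left is exactly the symmetric mean $P_{i,i-1}(\mathbf x_i)$ of order $i-1$ of $x_1,\dots,x_i$. Thus the theorem is equivalent to the Hardy-type estimate
\begin{equation*}
\sum_{i=1}^{n}P_{i,i-1}(\mathbf x_i)\le 3\sum_{i=1}^{n}x_i .
\end{equation*}
The optimal constant here is expected to be $e$, and the paper has just explained that $e$ would follow from the (still open) mixed mean inequality $P_{n,1}(\mathbf P_{n,n-1})\le P_{n,n-1}(\mathbf P_{n,1})$ through the telescoping argument of \cite{C&P}. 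Since that inequality is unavailable, I would replace it by a weaker but self-contained estimate, paying for it with the larger constant $3$.

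The approach I would take is the classical weighted arithmetic--geometric-mean proof of Carleman's inequality, adapted to symmetric means. Concretely, for each $i$ I would produce a linear majorant $P_{i,i-1}(\mathbf x_i)\le\sum_{k=1}^{i}d_{i,k}x_k$ with nonnegative coefficients, interchange the order of summation to get
\begin{equation*}
\sum_{i=1}^{N}P_{i,i-1}(\mathbf x_i)\le\sum_{k=1}^{N}\Big(\sum_{i=k}^{N}d_{i,k}\Big)x_k ,
\end{equation*}
and then verify that every column sum $\sum_{i\ge k}d_{i,k}$ is at most $3$. To build the majorant I would exploit the recursion
\begin{equation*}
E_{i,i-1}(\mathbf x_i)=G_{i-1}^{\,i-1}+x_iE_{i-1,i-2}(\mathbf x_{i-1}),\qquad\text{equivalently}\qquad iP_{i,i-1}^{\,i-1}=G_{i-1}^{\,i-1}+(i-1)x_iP_{i-1,i-2}^{\,i-2},
\end{equation*}
which isolates a pure geometric-mean part $G_{i-1}$ (controllable by Carleman's weights $(1+1/k)^{k}$, whose column sums are at most $e$) together with a cross term in $x_i$, the latter to be linearised by a weighted AM--GM or Young step.

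The step I expect to be the main obstacle is manufacturing a single linear majorant that is good enough on both ends of the range of $\mathbf x$. A symmetric mean of order $i-1$ does not pass cleanly through AM--GM: if one bounds each leave-one-out product $\prod_{j\ne k}x_j$ by its own arithmetic mean, the geometric structure is destroyed and the estimate degrades to $P_{i,i-1}\le\frac{i}{i-1}A_i$, whose column sums $\sum_{i\ge k}\frac{1}{i-1}$ diverge; on the other hand, any majorant built from $G_i$ alone must fail, since $P_{i,i-1}$ stays strictly positive when a single coordinate tends to $0$ whereas $G_i\to 0$. The real work is therefore to choose the weights so that the resulting linear bound is simultaneously sharp on the geometrically decaying sequences (the extremisers, where the true ratio approaches $e$) and robust to a vanishing coordinate, while keeping all column sums below $3$. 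The slack between $e$ and $3$ is precisely the room that this cruder, unconditional argument needs.
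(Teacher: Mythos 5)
Your reformulation is correct and your general framework---linear majorants $P_{i,i-1}(\mathbf{x}_i)\le\sum_k d_{i,k}x_k$ with column sums at most $3$---is exactly the shape of the paper's actual proof (which follows Knopp). But the proposal stops precisely at the point you yourself flag as the ``main obstacle'': no majorant is ever constructed, so this is a genuine gap rather than a technicality. The missing idea is that the Carleman rescaling trick transfers verbatim to symmetric means. Apply Maclaurin's inequality $P_{i,1}\ge P_{i,i-1}$ not to $\mathbf{x}_i$ but to the rescaled sequence $y_k=kx_k/(i+1)$, $1\le k\le i$. On the arithmetic side, $P_{i,1}(\mathbf{y})=a_i:=\sum_{k=1}^{i}kx_k/(i(i+1))$, and the columns of these coefficients telescope: $\sum_{i\ge k}\frac{k}{i(i+1)}=k\cdot\frac{1}{k}=1$, so $\sum_i a_i\le\sum_i x_i$. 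On the symmetric-mean side, each leave-one-out product in $E_{i,i-1}(\mathbf{y})$ picks up the factor
\begin{equation*}
\prod_{j\ne k}\frac{j}{i+1}=\frac{i!}{k\,(i+1)^{i-1}}\ \ge\ \frac{(i-1)!}{(i+1)^{i-1}}\qquad\text{uniformly in }k,
\end{equation*}
whence $P_{i,i-1}(\mathbf{y})\ge\gamma_i\,P_{i,i-1}(\mathbf{x}_i)$ with $\gamma_i=\bigl((i-1)!/(i+1)^{i-1}\bigr)^{1/(i-1)}$. This is exactly the linear majorant you were seeking, $P_{i,i-1}(\mathbf{x}_i)\le\gamma_i^{-1}a_i$, i.e.\ $d_{i,k}=\gamma_i^{-1}k/(i(i+1))$; it is automatically ``robust to a vanishing coordinate'' because the rescaling acts on the symmetric mean as a whole rather than passing through $G_i$, which is the failure mode you correctly diagnosed.

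What remains is the elementary bound $\gamma_i\ge 1/3$ for $i\ge 2$, equivalently $3^{i-1}(i-1)!\ge(i+1)^{i-1}$, which the paper proves by induction using $(1+1/n)^n<e$; note $\gamma_i\to 1/e$ by Stirling, so the gap between $e$ and $3$ is spent exactly here, confirming your heuristic about where the slack goes. Together with $P_{1,0}=x_1=2a_1\le 3a_1$, one gets $\sum_i P_{i,i-1}(\mathbf{x}_i)\le 3\sum_i a_i\le 3\sum_i x_i$. By contrast, your fallback route via the recursion $iP_{i,i-1}^{\,i-1}=G_{i-1}^{\,i-1}+(i-1)x_iP_{i-1,i-2}^{\,i-2}$ is unexecuted and looks hard to close: after extracting $(i-1)$-th roots the two terms do not separate additively, and any Young-type linearization of the cross term loses a constant at every level of the recursion, with no visible mechanism keeping the accumulated column sums bounded. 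So: right frame, but the proof as proposed is incomplete, and the single rescaled-Maclaurin step above is what closes it.
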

\begin{proof}
  We follow an approach of Knopp \cite{K} here (see also \cite{D&M}). For $i \geq 1$, we define
\begin{equation*}
  a_i=\sum^{i}_{k=1}\frac {k x_k}{i(i+1)}.
\end{equation*}
   It is easy to check by partial summation that
\begin{equation*}
  \sum^n_{i=1}a_i \leq  \sum^n_{i=1}x_i.
\end{equation*}
   Certainly we have $a_1 = x_1/2 = P_{1, 0}({\bf x}_1)/2$ and for $ i \geq 2$, we apply the inequality $P_{i, 1} \geq P_{i, i-1}$ to the numbers
$x_1/(i+1), 2x_2/(i+1), \ldots, ix_i/(i+1)$ to see that
\begin{equation*}
   a_i \geq \Big ( \frac {(i-1)!}{(i+1)^{i-1}} \Big )^{1/(i-1)}P_{i,i-1}({\bf x}_i) :=\gamma_{i}P_{i,i-1}({\bf x}_i).
\end{equation*}
  We now show by induction that $\gamma_{i} \geq 1/3$ for $i \geq 2$, equivalently, this is
\begin{equation}
\label{3.3}
  3^{i-1}(i-1)! \geq (i+1)^{i-1}.
\end{equation}
 Note first that the above inequality holds when $i=2,3$ and suppose now it holds for some $i=k \geq 3$, then by induction
\begin{equation*}
  3^{k}k! \geq 3k(k+1)^{k-1}.
\end{equation*}
   Now use $(1+1/n)^n <e$, we have
\begin{equation*}
   \frac {3k(k+1)^{k-1}}{(k+2)^{k}}= \frac {3k(k+2)}{(k+1)^{2}}\Big ( \frac {k+1}{k+2}\Big )^{k+1} \geq \frac {3k(k+2)}{e(k+1)^{2}}.
\end{equation*}
   It is easy to see that the last expression above is no less than $1$ when $k \geq 3$ and this proves 
inequality \eqref{3.3} for the case $i=k+1$ and this completes the proof of the theorem.
\end{proof}

%%----------------------------------------------------------------------------------

%%------------------------------------------------------------------------

\end{document}